\newtheorem{theorem}{Theorem}[section]
\newtheorem{corollary}[theorem]{Corollary}
\newcounter{maintheorem}
\newtheorem{mainth}[maintheorem]{Theorem}
\newcounter{maincorollary}
\theoremstyle{definition}
\theoremstyle{remark}
\numberwithin{equation}{section}
\def\sqr#1#2{{\,\vcenter{\vbox{\hrule height.#2pt\hbox{\vrule width.#2pt
height#1pt \kern#1pt\vrule width.#2pt}\hrule height.#2pt}}\,}}
\begin{document}
\title[$(1+)$-complemented, $(1+)$-isomorphic copies of $L_{1}$]{$(1+)$-complemented, $(1+)$-isomorphic copies of $L_{1}$\\ in dual Banach spaces}

\author{Dongyang Chen}
\address{School of Mathematical Sciences\\ Xiamen University,
Xiamen, 361005, China}
\email{cdy@xmu.edu.cn}

\author{Tomasz Kania}
\address{Mathematical Institute\\Czech Academy of Sciences\\\v Zitn\'a 25 \\115 67 Praha 1, Czech Republic\\
and\\  Institute of Mathematics and Computer Science\\ Jagiellonian University\\
{\L}ojasiewicza 6\\ 30-348 Krak\'{o}w, Poland}
\email{kania@math.cas.cz, tomasz.marcin.kania@gmail.com}

\author{Yingbin Ruan}
\address{School of Mathematics and Statistics\\ Fujian Normal University,
Fuzhou, 350007, China}
\email{yingbinruan@sohu.com}

\date{\today}
\subjclass[2010]{46B15 (primary),  46C05 (secondary).}
\keywords{Isomorphic copies of $L_{1}$; Complemented subspaces; Quotient maps; Banach spaces}


\begin{abstract}
The present paper contributes to the ongoing programme of quantification of isomorphic Banach space theory focusing on the Hagler--Stegall characterisation of dual spaces containing complemented copies of $L_{1}$. As a corollary, we obtain the following quantitative version of the Hagler--Stegall theorem asserting that for a Banach space $X$ the following statements are equivalent:
\begin{itemize}
    \item $X$ contains almost isometric copies of $(\bigoplus_{n=1}^{\infty}\oplus \ell_{\infty}^{n})_{\ell_1}$,
    \item for all $\varepsilon>0$, $X^{*}$ contains a $(1+\varepsilon)$-complemented, $(1+\varepsilon)$-isomorphic copy of $L_{1}$,
    \item for all $\varepsilon>0$, $X^{*}$ contains a $(1+\varepsilon)$-complemented, $(1+\varepsilon)$-isomorphic copy of $C[0,1]^{*}$.
\end{itemize}  Moreover, if $X$ is separable, one may add the following assertion:
\begin{itemize}
    \item for all $\varepsilon>0$, there exists a $(1+\varepsilon)$-quotient map $T\colon X\rightarrow C(\Delta)$ so that $T^{*}[C(\Delta)^{*}]$ is $(1+\varepsilon)$-complemented in $X^{*}$,
\end{itemize}
where $\Delta$ is the Cantor set.
\end{abstract}

\maketitle

\baselineskip=18pt 	
\section{Introduction}

In 1968, Pe{\l}czy\'{n}ski \cite{P} showed that if a Banach space $X$ contains an isomorphic copy of $\ell_{1}$, then the dual space $X^{*}$ contains an isomorphic copy of $L_{1}$ and proved that the converse holds as well subject to a mild technical condition that was later removed by Hagler \cite{H}. More precisely, the result stated that the isomorphic containment of $\ell_1$ is equivalent to the following assertions: $X^{*}$ contains a subspace isomorphic to $L_{1}$, $X^{*}$ contains a subspace isomorphic to $C[0,1]^{*}$. When $X$ is separable, these are further equivalent to the assertions: $X^{*}$ contains a subspace isomorphic to $l_{1}([0,1])$,
and $C[0,1]$ is a quotient of $X$.

Shortly after, Hagler and Stegall \cite{HS} obtained a `complemented' version of the aforementioned Pe{\l}czy\'{n}ski's classical work:

\noindent {\bf Theorem (Hagler--Stegall).}
Let $X$ be a Banach space. Then the following assertions are equivalent:
\begin{enumerate}
\item\label{th1i} $X$ contains a subspace isomorphic to $(\bigoplus_{n=1}^{\infty} \ell_{\infty}^{n})_{\ell_1}$;
\item\label{th1ii} $X^{*}$ contains a complemented subspace isomorphic to $L_{1}$;
\item\label{th1iii} $X^{*}$ contains a complemented subspace isomorphic to $C[0,1]^{*}$;
\item\label{th1iv} $X^{*}$ contains an infinite set $K$ such that $K$ is equivalent to the usual basis of $\ell_{1}(\Gamma)$ for some $\Gamma$, $[K]$ is complemented in $X^{*}$, and $K$ is dense in itself in the weak* topology on $X^{*}$;
\end{enumerate}

\noindent If, in addition, $X$ is separable, then the assertions \eqref{th1i}--\eqref{th1iv} are equivalent to
\begin{itemize}
\item[(5)] There exists a surjective operator $T\colon X\rightarrow C[0,1]$ such that $T^{*}[C[0,1]^{*}]$ is complemented in $X^{*}$.
\end{itemize}

The purpose of this note is to quantify the Hagler--Stegall Theorem in the spirit of a~large number of recent results on quantitative versions of various theorems on and properties of Banach spaces, such as, quantitative versions of Krein's theorem \cite{FHMZ}, Eberlein--\u{S}mulyan and Gantmacher theorems \cite{AC}, James' compactness theorem \cite{CKS}, weak sequential continuity and the Schur property \cite{KPS,KS1}, the (reciprical) Dunford--Pettis property \cite{KS, KKS}, the Banach--Saks property \cite{BKS}, etc. More broadly speaking, the present paper contributes to the on-going programme of quantification of Banach space theory. 

In the present paper, we quantify the Hagler--Stegall theorem by introducing the following three quantities denoted by lower-case Greek letters and defined as infima of certain sets (when the sets happen to be empty, we use the convention that the corresponding value is $\infty$).

Hereinafter $X$ and $Y$ will stand for Banach spaces; $\mathscr{B}(X,Y)$ is the space of (bounded, linear) operators from $X$ to $Y$. We then introduce the following quantities.

\begin{itemize}
    \item $ \alpha_{Y}(X)= \inf\{\operatorname{d}(Y,Z)\colon Z \text{ is a subspace of X }\}$, where $\operatorname{d}(Y,Z)$ is the Banach--Mazur distance between $Y$ and $Z$.\smallskip 
    
            The quantity $\alpha_{Y}(X)$, being directly related to the Banach--Mazur distance, measures how well $Y$ is from being isomorphically embeddable into $X$. Obviously, $\alpha_{Y}(X)=1$ if and only if $X$ contains almost isometric copies of $Y$, that is, for every $\varepsilon>0$, $X$ contains a~subspace  $(1+\varepsilon)$-isomorphic to $Y$.\smallskip

    \item $\beta_{Y}(X)=\inf\{\|A\|\|B\|\colon A\in \mathscr{B}(X,Y), B\in \mathscr{B}(Y,X)$,  $AB=I_{Y}\}$.\smallskip
    
    The quantity $\beta_{Y}(X)$ measures how well $Y$ is from being isomorphic to a complemented subspace of $X$. It is easy to see that $\beta_{Y}(X)=1$ if and only if for every $\varepsilon>0$, there exists a subspace $M$ of $X$ so that $M$ is $(1+\varepsilon)$-isomorphic to $Y$ and $(1+\varepsilon)$-complemented in $X$.\smallskip

    \item $\theta_{Y}(X)=\inf\{\|A\|\|S\|\colon A\in \mathscr{B}(X, Y), S\in \mathscr{B}(X^{*}, Y^{*}),  SA^{*}=I_{Y^{*}}\}$.\smallskip 
    
    The quantity $\theta_{Y}(X)$ measures how well $Y$ is isomorphic to a quotient of $X$ and its dual $Y^{*}$ is isomorphic to a complemented subspace of $X^{*}$.
    We see that $\theta_{Y}(X)=1$ if and only if, for every $\varepsilon>0$, there exists a $(1+\varepsilon)$-quotient map $T\colon X\rightarrow Y$ so that $T^{*}[Y^{*}]$ is $(1+\varepsilon)$-complemented in $X^{*}$.

    \end{itemize}
A straightforward argument shows that
\begin{equation}\label{9}
    \beta_{Y^{*}}(X^{*})\leqslant \theta_{Y}(X)\leqslant \beta_{Y}(X).
\end{equation}

By using the aforementioned three quantities, we quantify the Hagler--Stegall theorem as follows.

\begin{mainth}\label{Thm:B}
Let $X$ be a Banach space. Then
\[
    \alpha_{(\oplus_{n=1}^{\infty} \ell_{\infty}^{n})_{l_{1}}}(X)=\beta_{C[0,1]^{*}}(X^{*})=\beta_{L_{1}}(X^{*}).
\]
If, in addition, $X$ is separable, then
\[
    \theta_{C(\Delta)}(X)=\beta_{L_{1}}(X^{*}).
\]
\end{mainth}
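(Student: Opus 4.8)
The plan is to prove the two inequalities $\beta_{L_{1}}(X^{*})\leqslant\theta_{C(\Delta)}(X)$ and $\theta_{C(\Delta)}(X)\leqslant\beta_{L_{1}}(X^{*})$ separately; the first is soft, while the second carries all of the work and is where separability is used.

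For the lower bound, I would identify $C(\Delta)^{*}$ with the space $M(\Delta)$ of Radon measures on the Cantor set and fix the standard product probability measure $\mu$ on $\Delta=\{0,1\}^{\mathbb{N}}$. Then $L_{1}\cong L_{1}(\mu)$ sits in $M(\Delta)$ isometrically as the absolutely continuous measures, and the Lebesgue-decomposition map $\nu\mapsto\nu_{ac}$ is a norm-one projection onto it, since $\|\nu\|=\|\nu_{ac}\|+\|\nu_{s}\|$. Composing any factorisation of $I_{C(\Delta)^{*}}$ through $X^{*}$ with this projection and the isometric inclusion yields $\beta_{L_{1}}(X^{*})\leqslant\beta_{C(\Delta)^{*}}(X^{*})$, and the left-hand inequality of \eqref{9} gives $\beta_{C(\Delta)^{*}}(X^{*})\leqslant\theta_{C(\Delta)}(X)$. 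This settles the lower bound with no recourse to separability.

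For the upper bound I would invoke the identity $\beta_{L_{1}}(X^{*})=\alpha_{W}(X)$ already established in the first part, where $W:=(\bigoplus_{n=1}^{\infty}\ell_{\infty}^{n})_{\ell_{1}}$, so that it suffices to prove $\theta_{C(\Delta)}(X)\leqslant\alpha_{W}(X)$. Fixing $\varepsilon>0$ and an embedding $j\colon W\to X$ with $\|j\|\leqslant1$ and $\|j^{-1}\|\leqslant\lambda:=\alpha_{W}(X)+\varepsilon$, the goal is to produce $A\in\mathscr{B}(X,C(\Delta))$ and $S\in\mathscr{B}(X^{*},C(\Delta)^{*})$ with $SA^{*}=I_{C(\Delta)^{*}}$ and $\|A\|\,\|S\|\leqslant\lambda+\varepsilon$. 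I would use the inductive-limit picture $C(\Delta)=\overline{\bigcup_{k}E_{k}}$, where $E_{k}\cong\ell_{\infty}^{2^{k}}$ is the algebra of functions constant on the level-$k$ cylinders, the inclusions $E_{k}\hookrightarrow E_{k+1}$ being isometric and the averaging maps $E_{k+1}\to E_{k}$ contractive. A map $A_{k}\colon X\to E_{k}$ amounts to a family $(\phi_{s})_{s\in\{0,1\}^{k}}$ in $X^{*}$ through $A_{k}x=(\langle\phi_{s},x\rangle)_{s}$, and these $A_{k}$ assemble into a single $A\colon X\to C(\Delta)$ exactly when the $\phi_{s}$ form a martingale, $\phi_{s}=\tfrac{1}{2}(\phi_{s0}+\phi_{s1})$, in which case $\|A\|=\sup_{s}\|\phi_{s}\|$ and the branch limits $\phi_{\gamma}=\mathrm{w}^{*}\text{-}\lim_{k}\phi_{\gamma|_{k}}$ are the values $A^{*}\delta_{\gamma}$, $\gamma\in\Delta$. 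The task thus reduces to manufacturing a martingale tree $(\phi_{s})$ out of the embedded copy of $W$: its $\ell_{\infty}$-summands supply, level by level, signed coordinate functionals whose martingale differences $\phi_{s0}-\phi_{s}$ are — by the $\ell_{1}$-sum structure of $W$ — equivalent to the $\ell_{1}$-basis, the $\ell_{\infty}$-normalisation keeping $\sup_{s}\|\phi_{s}\|\leqslant\lambda$. Equivalently, I would arrange $A^{*}$ to be a $\lambda$-isomorphic embedding of the whole of $M(\Delta)$ onto a complemented subspace of $X^{*}$ and then set $S:=(A^{*})^{-1}P$ for the accompanying projection $P$, whence $SA^{*}=I$. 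Separability of $X$ enters through the weak*-metrisability of $B_{X^{*}}$: a diagonal weak*-compactness argument secures the branch limits and lets one refine the tree so that the branch map $\gamma\mapsto\phi_{\gamma}$ is weak*-continuous and the tree is weak*-dense-in-itself, ensuring that $A$ genuinely lands in $C(\Delta)$ (and not in a proper sub-$C(K)$) and is a $(1+\varepsilon)$-quotient map.

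I expect the principal obstacle to be quantitative and to lie entirely in this construction. One must extract from a mere $(1+\varepsilon)$-isomorphic copy of $W$ a genuine martingale tree that \emph{simultaneously} satisfies the exact averaging identity (so that $A$ maps into $C(\Delta)$), has branch functionals $\ell_{1}$-equivalent with constant tending to $1$ and a weak*-continuous branch map (so that $A$ is a near-metric surjection), and obeys $\sup_{s}\|\phi_{s}\|\leqslant\lambda$, all with constants converging to $\alpha_{W}(X)$; and one must then verify that $S$ inverts $A^{*}$ on the \emph{whole} nonseparable space $M(\Delta)$ — diffuse and singular measures included — rather than only on its $L_{1}$-part. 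It is precisely the totally disconnected model $\Delta$, whose isometric $\ell_{\infty}^{2^{k}}$ blocks match the summands of $W$ and whose averaging projections are contractive, that lets the final constant equal $\beta_{L_{1}}(X^{*})$ exactly; the homeomorphic but non-isometric model $C[0,1]$ used in the first part would blur it. As a byproduct the sandwich $\theta_{C(\Delta)}(X)\leqslant\beta_{L_{1}}(X^{*})\leqslant\beta_{C(\Delta)^{*}}(X^{*})\leqslant\theta_{C(\Delta)}(X)$ collapses, yielding $\theta_{C(\Delta)}(X)=\beta_{C(\Delta)^{*}}(X^{*})=\beta_{L_{1}}(X^{*})$ for separable $X$.
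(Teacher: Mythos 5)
Your proposal proves only the second displayed identity, $\theta_{C(\Delta)}(X)=\beta_{L_{1}}(X^{*})$. The chain $\alpha_{W}(X)=\beta_{C[0,1]^{*}}(X^{*})=\beta_{L_{1}}(X^{*})$, which is the bulk of the theorem and valid without separability, is invoked as ``already established in the first part'' but never argued. In the paper this occupies three of the five steps: the inequality $\beta_{C(\Delta)^{*}}(X^{*})\leqslant\alpha_{Z}(X)$ (with $Z=(\bigoplus_{n}\ell_{\infty}^{2^{n}})_{\ell_{1}}$) is obtained by lifting finite-rank pieces of $I_{C(\Delta)}$ into $Z$, using the metric approximation property of $C(\Delta)$ and a weak$^{*}$ cluster-point argument, together with a Lindenstrauss-type compactness argument exploiting that $C(\Delta)^{*}$ is an $\mathcal{L}_{1,1+}$-space; the converse $\alpha_{W}(X)\leqslant\beta_{L_{1}}(X^{*})$ rests on Lemma~3 of \cite{HS}, which produces a biorthogonal tree system $(f_{n,i})\subseteq L_{\infty}$, $(x_{n,i})\subseteq X$ with two-sided $\ell_{\infty}^{2^{n}}$-estimates of constant $c(1+\varepsilon_{n})$. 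Your lower bound $\beta_{L_{1}}(X^{*})\leqslant\theta_{C(\Delta)}(X)$ via the Lebesgue-decomposition projection and the left-hand inequality of \eqref{9} is correct and coincides with the paper's Steps 2 and 4.

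The genuine gap in what you do attempt lies in the upper bound. Even granting that you can extract a martingale tree $(\phi_{s})$ with a weak$^{*}$-continuous branch map, so that $A\colon X\rightarrow C(\Delta)$ is a near-quotient map and $A^{*}$ embeds all of $M(\Delta)$ into $X^{*}$, nothing in your construction produces the projection $P$ of $X^{*}$ onto $A^{*}[M(\Delta)]$ --- you simply posit ``the accompanying projection $P$'' and set $S:=(A^{*})^{-1}P$. That complementation is precisely what separates the Hagler--Stegall theorem from Pe{\l}czy\'nski's, and it does not follow from $A$ being a quotient map: every separable space, $C(\Delta)$ included, is a quotient of $\ell_{1}$, yet $\ell_{\infty}=\ell_{1}^{*}$ contains no complemented copy of $M(\Delta)$ (being prime, it has no infinite-dimensional separable complemented subspace). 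In the paper the left inverse $S$ of $T^{*}$ is manufactured, not assumed: one lifts finite-rank pieces of $I_{C(K)}$ back to $X$ with norm control $c(1+\varepsilon)$ --- the constant $c$ entering through the upper $\ell_{\infty}$-estimate on the vectors $x_{m,i}$, i.e.\ through the operator $B\colon X^{*}\rightarrow L_{1}$ witnessing the complemented copy of $L_{1}$ --- and then takes a weak$^{*}$ cluster point of the net $((\widehat{I_{\alpha}}T_{\alpha})^{*})_{\alpha}$ supplied by the metric approximation property of $C(K)$. Your plan starts only from the embedding $j\colon W\rightarrow X$ and so has access neither to such an upper estimate nor to any lifting mechanism; until $P$ is actually constructed (for instance by passing back, via part one, to a complemented copy of $L_{1}$ in $X^{*}$ and running the lifting argument), the step $SA^{*}=I_{C(\Delta)^{*}}$ is unsupported.
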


The following $(1+\varepsilon)$-version of the Hagler--Stegall theorem follows from Theorem \ref{Thm:B}.

\begin{corollary}
Let $X$ be a Banach space. Then the following assertions are equivalent:
\begin{enumerate}
    \item[(1)] $X$ contains almost isometric copies of $(\bigoplus_{n=1}^{\infty}\ell_{\infty}^{n})_{l_{1}}$;
    \item[(2)] $X^{*}$ contains a $(1+\varepsilon)$-complemented subspace that is $(1+\varepsilon)$-isomorphic to $L_{1}$ for every $\varepsilon>0$;
    \item[(3)] $X^{*}$ contains a $(1+\varepsilon)$-complemented subspace that is $(1+\varepsilon)$-isomorphic to $C[0,1]^{*}$ for every $\varepsilon>0$.
\end{enumerate}

\noindent If, in addition, $X$ is separable, then
\begin{itemize}
    \item[(4)] For every $\varepsilon>0$, there exists a $(1+\varepsilon)$-quotient map $T\colon X\rightarrow C(\Delta)$ so that $T^{*}[C(\Delta)^{*}]$ is $(1+\varepsilon)$-complemented in $X^{*}$.
\end{itemize}
\end{corollary}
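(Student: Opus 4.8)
The plan is to read the corollary off from Theorem~\ref{Thm:B}, using the elementary characterisations of the conditions $\alpha_{Y}(X)=1$, $\beta_{Y}(X)=1$ and $\theta_{Y}(X)=1$ recorded immediately after the definitions of these quantities. Those remarks tell us that $\alpha_{Y}(X)=1$ holds precisely when $X$ contains almost isometric copies of $Y$, that $\beta_{Y}(X)=1$ holds precisely when for every $\varepsilon>0$ there is a subspace of $X$ that is $(1+\varepsilon)$-isomorphic to $Y$ and $(1+\varepsilon)$-complemented in $X$, and that $\theta_{Y}(X)=1$ holds precisely when for every $\varepsilon>0$ there is a $(1+\varepsilon)$-quotient map $T\colon X\to Y$ with $T^{*}[Y^{*}]$ being $(1+\varepsilon)$-complemented in $X^{*}$. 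Matching these with the four assertions of the corollary, I would observe that (1) is the statement $\alpha_{(\bigoplus_{n=1}^{\infty}\ell_{\infty}^{n})_{\ell_1}}(X)=1$, that (2) is $\beta_{L_{1}}(X^{*})=1$, that (3) is $\beta_{C[0,1]^{*}}(X^{*})=1$, and that (in the separable case) (4) is $\theta_{C(\Delta)}(X)=1$.

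With these translations in hand, the equivalences become a formal consequence of the numerical equalities of Theorem~\ref{Thm:B}. The first equality chain
\[
\alpha_{(\bigoplus_{n=1}^{\infty}\ell_{\infty}^{n})_{\ell_1}}(X)=\beta_{C[0,1]^{*}}(X^{*})=\beta_{L_{1}}(X^{*})
\]
shows that these three real numbers take the value $1$ simultaneously or not at all; since each of them equals $1$ exactly when the corresponding assertion holds, assertions (1), (2) and (3) are equivalent. For separable $X$, I would then invoke the second equality $\theta_{C(\Delta)}(X)=\beta_{L_{1}}(X^{*})$, which forces $\theta_{C(\Delta)}(X)=1$ if and only if $\beta_{L_{1}}(X^{*})=1$; hence (4) is equivalent to (2), and therefore to (1) and (3) as well.

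The only point requiring genuine attention is the verification of the two directions of each ``value $=1$'' characterisation, since the quantities $\alpha$, $\beta$, $\theta$ are infima and need not be attained; however this is exactly the content of the remarks following the definitions and may be cited directly. For $\beta$, for instance, one direction unwinds the defining infimum to produce, for each $\varepsilon>0$, operators $A,B$ with $AB=I_{Y}$ and $\|A\|\,\|B\|<1+\varepsilon$, from which a $(1+\varepsilon')$-isomorphic and $(1+\varepsilon')$-complemented copy of $Y$ is extracted after rescaling, while the reverse direction is the obvious estimate $\|A\|\,\|B\|\le(1+\varepsilon)^{2}$. Consequently I expect no real obstacle in the corollary itself: all the mathematical substance is carried by Theorem~\ref{Thm:B}, and the corollary merely restates its equalities in the qualitative language of almost-isometric embeddings, almost-isometric complementation, and almost-isometric quotients.
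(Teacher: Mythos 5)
Your proposal is correct and matches the paper exactly: the paper offers no separate argument for the corollary beyond the remark that it ``follows from Theorem~\ref{Thm:B}'', and your translation of each assertion into the statement that the corresponding quantity $\alpha$, $\beta$ or $\theta$ equals $1$ (using the characterisations recorded after the definitions) is precisely the intended deduction. The additional care you take in unwinding the infima for $\beta$ is sound and fills in the only detail the paper leaves implicit.
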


\section{Preliminaries}

Our notation and terminology are standard and mostly in-line with \cite{AK,LT}. Throughout the paper, all Banach spaces can be considered either real or complex. We work with real scalars but the results can be easily amended to the complex too. By a \emph{subspace} we understand a closed, linear subspace and by an \emph{operator} we understand a bounded, linear map.
If $X$ is a Banach space, we denote by $B_{X}$ the closed unit ball of $X$, by $I_{X}$ the identity operator on $X$ and, for a~subset $K\subseteq X$, $[K]$ for the closed linear span of $K$.
For $\lambda\geqslant 1$, we say that a surjective operator $T\colon X\rightarrow Y$ is a $\lambda$-\textit{quotient map} if $\|T\|\operatorname{co}(T)\leqslant \lambda$.
\emph{Quotient maps} are 1-quotient maps according to the above terminology. A~norm-one surjective operator $T\colon X\rightarrow Y$ is a quotient map if and only if $T$ is a $(1+)$-quotient map, that is, a $(1+\varepsilon)$-quotient map for every $\varepsilon>0$.

The \textit{Banach--Mazur distance} $\operatorname{d}(X,Y)$ between two isomorphic Banach spaces $X$ and $Y$ is defined by $\inf\|T\|\|T^{-1}\|$, where the infimum is taken over all isomorphisms $T$ from $X$ onto $Y$. As defined by Lindenstrauss and Rosenthal \cite{LR}, for $\lambda\geqslant 1$, a Banach space $X$ is said to be a $\mathcal{L}_{1,\lambda}$-\textit{space} whenever for every finite-dimensional subspace $E$ of $X$ there is a finite-dimensional subspace $F$ of $X$ such that $F\supseteq E$ and $\operatorname{d}(F,l_{1}^{\dim F})\leqslant \lambda$. We say that a Banach space $X$ is an $\mathcal{L}_{1,\lambda+}$-\textit{space} if it is an $\mathcal{L}_{1,\lambda+\varepsilon}$-\textit{space} for all $\varepsilon>0$.


Following the notation from \cite{HS}, we denote \[
    \mathcal{F}=\{(n,i)\colon n=0, 1,\ldots,  i = 0, 1, \ldots, 2^{n}-1\}
\]
and, for $(n,i), (m,j)\in \mathcal{F}$ we write $(n,i)\geqslant (m,j)$ whenever
\begin{itemize}
    \item $n\geqslant m$,
    \item $2^{n-m}j\leqslant i\leqslant 2^{n-m}(j+1)-1$.
\end{itemize}
Let $\Delta=\{0,1\}^{\mathbb{N}}$ be the Cantor set endowed with the metric
\[\operatorname{d}((a_{n})_{n=1}^\infty,(b_{n})_{n=1}^\infty)=\sum_{n=1}^\infty\frac{1}{2^{n}}|a_{n}-b_{n}|\quad \big((a_{n})_{n},(b_{n})_{n}\in \Delta\big).\]
By Miljutin's Theorem (\cite[Lemma 4.4.7]{AK}), $C[0,1]$ is isomorphic (but not isometric) to $C(\Delta)$. It is well-known that $C(\Delta)^{*}$ and $C[0,1]^{*}$ are linearly isometric, though.

\section{Proof of Theorem \ref{Thm:B}}

The present section is devoted to the proof of Theorem \ref{Thm:B} and is conveniently split into more digestible parts.

\begin{proof}[Proof of Theorem~\ref{Thm:B}] We split the proof into a number of steps.

\noindent{Step 1.} $\beta_{C(\Delta)^{*}}(X^{*})\leqslant \alpha_{(\bigoplus_{n=1}^{\infty}\ell_{\infty}^{n})_{\ell_1}}(X)$.

Since $Z = (\bigoplus_{n=1}^{\infty} \ell_{\infty}^{2^{n}})_{\ell_1}$ embeds isometrically into $(\bigoplus_{n=1}^{\infty} \ell_{\infty}^{n})_{\ell_1}$, it suffices to prove that $\alpha_{Z}(X)\geqslant \beta_{C(\Delta)^{*}}(X^{*})$. For this, let us fix $c>\alpha_{Z}(X)$. Then there exists a~contractive operator $R\colon Z\rightarrow X$ that is bounded below by $1/c$.

Let us consider a double-indexed family $(\Delta_{n,i})_{n=0, i=0}^{\infty, 2^{n}-1}$ of clopen subsets of the Cantor set such that

\begin{enumerate}
    \item $\Delta_{0,0}=\Delta$, $\Delta_{n,i}=\Delta_{n+1,2i}\cup \Delta_{n+1,2i+1}$ ($(n,i)\in \mathcal{F}$) and $\Delta_{n,i}\cap \Delta_{n,j}=\varnothing$ if $i\neq j$;
    \item the diameter of $\Delta_{n,i}$ is $1/{2^{n}}$ ($0\leqslant i\leqslant 2^{n}-1$).
\end{enumerate}
We set $g_{n,i}=\mathds{1}_{\Delta_{n,i}}$, which is a continuous function,  $[g_{n,i}]_{i=0}^{2^{n}-1}\subseteq [g_{n+1,i}]_{i=0}^{2^{n+1}-1}$, $(g_{n,i})_{i=0}^{2^{n}-1}$ is isometrically equivalent to the unit vector basis of $\ell_{\infty}^{2^{n}}$ ($n\in \mathbb N$) and $\bigcup_{n=0}^{\infty}[g_{n,i}]_{i=0}^{2^{n}-1}$ is dense in $C(\Delta)$. 
We may then define an operator $T\colon Z\rightarrow C(\Delta)$ by the assignment $Te_{n,i}=g_{n,i}$. For each $n$, $T$ is an isometry when restricted to $[e_{n,i}\colon 0\leqslant i\leqslant 2^{n}-1]$. Clearly, $\|T\|=1$.\medskip

\noindent \emph{Claim} 1. If $W$ is a finite-dimensional Banach space and $S\colon W\rightarrow C(\Delta)$ is an operator, then for every $\varepsilon>0$, there exists an operator $\widehat{S}\colon W\rightarrow Z$ so that $\|\widehat{S}\|\leqslant (1+\varepsilon)\|S\|$ and $\|S-T\widehat{S}\|\leqslant \varepsilon$.

\begin{proof}[Proof of Claim 1]Let us fix an Auerbach basis $(w_{k},w_{k}^{*})_{k=1}^{N}$ for $W$ ($\dim W=N$). So if $w=\sum_{k=1}^{N}a_{k}w_{k}\in W$, then for each $1\leqslant j\leqslant N$, we get 
\[ 
    |a_{j}|\leqslant |\langle w^{*}_{j},\sum_{k=1}^{N}a_{k}w_{k}\rangle|\leqslant \|w^{*}_{j}\|\|w\|=\|w\|.
\]
It follows that $\sum_{k=1}^{N}|a_{k}|\leqslant N\|w\|.$
Let $\delta>0$ be such that $\delta N\leqslant \varepsilon \|S\|$ and $\delta N\leqslant \varepsilon$. Then, there exist a $n\in \mathbb N$ and $(f_{k})_{k=1}^{N}$ in $[g_{n,i}]_{i=0}^{2^{n}-1}$ so that $\|Sw_{k}-f_{k}\|<\delta$ ($k=1, 2, \ldots, N$). Let us write $f_{k}=\sum_{i=0}^{2^{n}-1}t_{k,i}g_{n,i}$ $(k=1, 2, \ldots, N).$

Define an operator $\widehat{S}\colon W\rightarrow Z$ by 
\[
    \widehat{S}w_{k}=\sum _{i=0}^{2^{n}-1}t_{k,i}e_{n,i}.
\]
We \emph{claim} that $\|\widehat{S}\|\leqslant (1+\varepsilon)\|S\|$ and $\|S-T\widehat{S}\|\leqslant \varepsilon$. Indeed, for $w=\sum_{k=1}^{N}a_{k}w_{k}\in W$, we have
\[
    \begin{array}{lcl}
        \|\widehat{S}w\| & = & \|\sum _{k=1}^{N}a_{k}\widehat{S}w_{k}\|=\|\sum _{k=1}^{N}a_{k}T\widehat{S}w_{k}\|\\
        &=&\|\sum _{k=1}^{N}a_{k}f_{k}\|
        \leqslant \|\sum _{k=1}^{N}a_{k}(f_{k}-Sw_{k})\|+\|\sum _{k=1}^{N}a_{k}Sw_{k}\| \\
        & \leqslant & \sum _{k=1}^{N}|a_{k}|\|f_{k}-Sw_{k}\|+\|S\|\|w\| \\
        &\leqslant & N\|w\|\delta+\|S\|\|w\|\\
        & \leqslant & (1+\varepsilon)\|S\|\|w\|.
    \end{array}
\]
Furthermore,
\[
    \begin{array}{lcl}
    \|Sw-T\widehat{S}w\| &=&\|\sum _{k=1}^{N}a_{k}(Sw_{k}-\sum _{i=0}^{2^{n}-1}t_{k,i}g_{n,i})\|\\
    &=&\|\sum _{k=1}^{N}a_{k}(Sw_{k}-f_{k})\|\\
    &\leqslant&\delta N\|w\|\\
    &\leqslant&\varepsilon \|w\|.
    \end{array}
\]
\end{proof}

Let $\varepsilon>0$. Since $C(\Delta)$ has the metric approximation property, there exists a net $(T_{\alpha})_{\alpha}$ of finite-rank operators on $C(\Delta)$ such that
\begin{itemize}
    \item $\limsup _{\alpha}\|T_{\alpha}\|\leqslant 1+\varepsilon$,
    \item $\dim T_{\alpha}(C(\Delta))\rightarrow \infty$,
    \item $T_{\alpha}\rightarrow I_{C(\Delta)}$ strongly.
\end{itemize}
For each $\alpha$, we may apply Claim 1 to the inclusion map $I_{\alpha}\colon T_{\alpha}[C(\Delta)]\rightarrow C(\Delta)$ in order to get an~operator $\widehat{I_{\alpha}}\colon T_{\alpha}[C(\Delta)]\rightarrow Z$ such that
\begin{itemize}
    \item $\|\widehat{I_{\alpha}}\|\leqslant 1+\varepsilon$,
    \item $\|I_{\alpha}-T\widehat{I_{\alpha}}\|\leqslant (1+\dim T_{\alpha}[C(\Delta)])^{-2}$.
\end{itemize} Hence, for $f\in C(\Delta)$, we get
\[
\begin{array}{lcl}
    \|T\widehat{I_{\alpha}}T_{\alpha}f-f\|
    &\leqslant& \|T\widehat{I_{\alpha}}T_{\alpha}f-I_{\alpha}T_{\alpha}f\|+\|T_{\alpha}f-f\|\\
    &\leqslant & \|T\widehat{I_{\alpha}}-I_{\alpha}\|\|T_{\alpha}\|\|f\|+\|T_{\alpha}f-f\|\rightarrow 0.
\end{array}
\]
Let $S$ be a $\sigma(\mathscr{B}(Z^{*},C(\Delta)^{*}),Z^{*}\widehat{\otimes}_{\pi}C(\Delta))$-cluster point of the net $((\widehat{I_{\alpha}}T_{\alpha})^{*})_{\alpha}$. We show that $ST^{*}=I_{C(\Delta)^{*}}$. Indeed, we choose a subnet $((\widehat{I_{\alpha'}}T_{\alpha'})^{*})_{\alpha'}$ of $((\widehat{I_{\alpha}}T_{\alpha})^{*})_{\alpha}$ so that
$(\widehat{I_{\alpha'}}T_{\alpha'})^{*}\rightarrow S$ in $\sigma(\mathscr{B}(Z^{*},C(\Delta)^{*}),Z^{*}\widehat{\otimes}_{\pi}C(\Delta))$-topology. Then, for $f\in C(\Delta)$ and $\mu\in C(\Delta)^{*}$, we get $\langle (\widehat{I_{\alpha'}}T_{\alpha'})^{*}T^{*}\mu,f\rangle\rightarrow \langle ST^{*}\mu,f\rangle$. On the other hand, we have $$\langle (\widehat{I_{\alpha'}}T_{\alpha'})^{*}T^{*}\mu,f\rangle=\langle \mu, TI_{\alpha'}T_{\alpha'}f\rangle\rightarrow \langle\mu,f\rangle.$$
Therefore, $\langle ST^{*}\mu,f\rangle=\langle\mu,f\rangle.$

\medskip

\noindent \emph{Claim} 2. There exists an operator $\widetilde{T}\colon C(\Delta)^{*}\rightarrow X^{*}$ so that $R^{*}\widetilde{T}=T^{*}$ and $\|\widetilde{T}\|\leqslant c(1+\varepsilon)$.

The proof of the claim is a variation of the Lindenstrauss' compactness argument (see \cite[Proposition 1]{Johnson:1972} and \cite[Lemma 2]{Lindenstrauss:1966}). Since some amendments are required, we present the full reasoning.

\begin{proof}[Proof of Claim 2] We use the fact that $C(\Delta)^{*}$ is isometric to $L_{1}(\mu)$ for some infinite measure $\mu$, and as such, it is a $\mathcal{L}_{1,1+}$-space. Let $\Lambda$ be the collection of all finite-dimensional subspaces of $C(\Delta)^{*}$. Then, for each $\gamma\in \Lambda$ there exist $E_{\gamma}\in \Lambda$ with $\gamma\subseteq E_{\gamma}$ together with an isomorphism $U_{\gamma}\colon \ell_{1}^{\dim E_{\gamma}}\rightarrow E_{\gamma}$ so that $\|U_{\gamma}\|\|U_{\gamma}^{-1}\|\leqslant 1+\varepsilon$. Let $S_{\gamma}\colon Z\rightarrow E_{\gamma}^{*}$ be an operator such that $S_{\gamma}^{*}=T^{*}|_{E_{\gamma}}$ ($\gamma\in \Lambda$). By the 1-injectivity of $\ell_{\infty}^{\dim E_{\gamma}}$, there is an operator $R_{\gamma}\colon X\rightarrow \ell_{\infty}^{\dim E_{\gamma}}$ so that $R_{\gamma}R=U_{\gamma}^{*}S_{\gamma}$ and $\|R_{\gamma}\|\leqslant \|U_{\gamma}^{*}S_{\gamma}\|\|R^{-1}\|\leqslant \|U_{\gamma}\|\|T\|\|R^{-1}\|$. Let $T_{\gamma}=R_{\gamma}^{*}U_{\gamma}^{-1}\colon E_{\gamma}\rightarrow X^{*}$. Then $R^{*}T_{\gamma}= T^{*}|_{E_{\gamma}}$ and $\|T_{\gamma}\|\leqslant c(1+\varepsilon)\|T\|$. For each $\gamma$, we define a non-linear, discontinuous function from $C(\Delta)^{*}$ to $X^{*}$ by
\[
    \widetilde{T_{\gamma}}f=
        \left\{ \begin{array}
                    {l@{\quad}l}

            T_{\gamma}f, & f\in E_{\gamma}\\ 0, & \text{otherwise.}
        \end{array} \right.
\]
Then $(\widetilde{T_{\gamma}})_{\gamma}$ is a net in the compact space
\[
    \prod _{f\in C(\Delta)^{*}}c(1+\varepsilon)\|T\|\|f\|B_{X^{*}}.
\]
and as such, it has a cluster point $\widetilde{T}$. Standard arguments show that $\widetilde{T}$ is linear, $R^{*}\widetilde{T}=T^{*}$ and $\|\widetilde{T}\|\leqslant c(1+\varepsilon)\|T\|=c(1+\varepsilon)$.\end{proof}

Finally, we get $SR^{*}\widetilde{T}=ST^{*}=I_{C(\Delta)^{*}}$ and hence
$$\beta_{C(\Delta)^{*}}(X^{*})\leqslant \|\widetilde{T}\|\|SR^{*}\|\leqslant c(1+\varepsilon)^{3}.$$

Letting $\varepsilon\rightarrow 0$, we get $\beta_{C(\Delta)^{*}}(X^{*})\leqslant c$. As $c$ is arbitrary, we get Step 1.

\noindent{Step 2.} $\beta_{L_{1}}(X^{*})\leqslant \beta_{C[0,1]^{*}}(X^{*})$.

It is well known that $L_1$ is isometric to a $1$-complemented subspace of $C[0,1]^{*}$ (see, \emph{e.g.}, \cite[p.~85]{AK}), which implies Step 2.\bigskip

\noindent{Step 3.} $\alpha_{(\bigoplus_{n=1}^{\infty} \ell_{\infty}^{n})_{\ell_1}}(X)\leqslant \beta_{L_{1}}(X^{*})$.

Let $c>\beta_{L_{1}}(X^{*})$. Then there exist operators $A\colon L_{1}\rightarrow X^{*}, B\colon X^{*}\rightarrow L_{1}$ so that $BA=I_{L_{1}}, \|A\|=1$ and $\|B\|<c$. Let $0<\varepsilon<1$ and $\varepsilon_{n}=\varepsilon/2^{2n+3}$ ($n=0,1,\ldots$).

By \cite[Lemma 3]{HS}, we get $(f_{n,i})_{(n,i)\in \mathcal{F}}$ in $L_{\infty}$ and $(x_{n,i})_{(n,i)\in \mathcal{F}}$ in $X$ satisfying
\begin{enumerate}
    \item $\|f_{n,i}\|_{1}=1$ and $f_{n,i}\geqslant 0$ everywhere for all $(n,i)\in \mathcal{F}$;
    \item  For each $n$ and $i\neq j$, $f_{n,i}(t)$ and $f_{n,j}(t)$ cannot be both non-zero for the same $t\in [0,1]$;
    \item\label{B3} \[\langle Af_{n,i},x_{m,j}\rangle= \left\{ \begin{array}
                    {r@{\quad}l}

 1, & (n,i)\geqslant (m,j),\\ 0, & \text{otherwise;}
 \end{array} \right. \]
 \item\label{B4} $\max_{0\leqslant i\leqslant 2^{n}-1}|t_{i}|\leqslant \|\sum _{i=0}^{2^{n}-1}t_{i}x_{n,i}\|\leqslant c(1+\varepsilon_{n}) \max _{0\leqslant i\leqslant 2^{n}-1}|t_{i}|$\\ ($n=0,1,\cdots$; $t_{0},\ldots, t_{2^{n}-1}\in \mathbb R$).
\end{enumerate}

We may now define recursively a sequence $(W_{n,i})_{(n,i)\in \mathcal{F}}$ of non-empty weak*-closed subsets of $B_{X^{*}}$ as follows:

\begin{itemize}
    \item $W_{0,0}=\{x^{*}\in B_{X^{*}}\colon |\langle x^{*},x_{0,0}\rangle-1|\leqslant \varepsilon_{0}\},$
    \item $
        W_{1,0}=W_{0,0}\cap \{x^{*}\in B_{X^{*}}\colon |\langle x^{*},x_{1,0}\rangle-1|\leqslant \varepsilon_{1}, |\langle x^{*},x_{1,1}\rangle|\leqslant \varepsilon_{1}\},
       $
      \item $W_{1,1}=W_{0,0}\cap \{x^{*}\in B_{X^{*}}\colon |\langle x^{*},x_{1,1}\rangle-1|\leqslant \varepsilon_{1}, |\langle x^{*},x_{1,0}\rangle|\leqslant \varepsilon_{1}\},$
      \item $W_{2,0}=W_{1,0}\cap \{x^{*}\in B_{X^{*}}\colon |\langle x^{*},x_{2,0}\rangle-1|\leqslant \varepsilon_{2}, |\langle x^{*},x_{2,j}\rangle|\leqslant \varepsilon_{2},j=1,2,3\},$
      \item $W_{2,1}=W_{1,0}\cap \{x^{*}\in B_{X^{*}}\colon |\langle x^{*},x_{2,1}\rangle-1|\leqslant \varepsilon_{2}, |\langle x^{*},x_{2,j}\rangle|\leqslant \varepsilon_{2},j=0,2,3\},$
      \item $W_{2,2}=W_{1,1}\cap \{x^{*}\in B_{X^{*}}\colon |\langle x^{*},x_{2,2}\rangle-1|\leqslant \varepsilon_{2}, |\langle x^{*},x_{2,j}\rangle|\leqslant \varepsilon_{2},j=0,1,3\},$
      \item $W_{2,3}=W_{1,1}\cap \{x^{*}\in B_{X^{*}}\colon|\langle x^{*},x_{2,3}\rangle-1|\leqslant \varepsilon_{2}, |\langle x^{*},x_{2,j}\rangle|\leqslant \varepsilon_{2},j=0,1,2\}$,
\end{itemize}
and so on. By \eqref{B3}, each $W_{n,i}$ is non-empty. By the choice of $\varepsilon_{n}$, the sets $W_{n,i},  W_{n,j}$ are disjoint as long as $i\neq j$. Let
\[
    K=\bigcap _{n=0}^{\infty}(\bigcup _{i=0}^{2^{n}-1}W_{n,i})\quad\text{and}\quad K_{n,i}=W_{n,i}\cap K\;\; \big((n,i)\in \mathcal{F}\big).\]
    By \eqref{B3}, $Af_{n,i}\in W_{m,j}$ if $(n,i)\geqslant (m,j)$, which implies that each $K_{n,i}$ is non-empty. By the construction of the sequence $(W_{n,i})$, we see that $K_{0,0}=K, K_{n+1,2i}\cup K_{n+1,2i+1}=K_{n,i}$ and $K_{n,i}\cap K_{n,j}=\varnothing$ if $i\neq j$.

    Let us define an operator $T\colon X\rightarrow C(K)$ by $\langle Tx,x^{*}\rangle=\langle x^{*},x\rangle$ ($x\in X,x^{*}\in K$). Then $|\langle Tx_{n,i},x^{*}\rangle-1|\leqslant \varepsilon_{n}$ if $x^{*}\in K_{n,i}$, and $|\langle Tx_{n,i},x^{*}\rangle|\leqslant \varepsilon_{n}$ if $x^{*}\in \bigcup_{j\neq i}K_{n,j}$. Set $g_{n,i}=\mathds{1}_{K_{n,i}}$, which is continuous as $K_{n,i}$ is clopen. Then $\|Tx_{n,i}-g_{n,i}\|\leqslant \varepsilon_{n}$. Moreover, $[g_{n,i}]_{i=0}^{2^{n}-1}\subseteq [g_{n+1,i}]_{i=0}^{2^{n+1}-1}$, $(g_{n,i})_{i=0}^{2^{n}-1}$ is isometrically equivalent to the unit vector basis of $\ell_{\infty}^{2^{n}}$ for all $n$, and
    \[
        [g_{n,i}\colon (n,i)\in \mathcal{F}]=\overline{\bigcup _{n=0}^{\infty}[g_{n,i}]_{i=0}^{2^{n}-1}}
    \]
is isometric to $C(\Delta)$. Let $Z$ be a subspace of $C(\Delta)$ isometric to $(\bigoplus_{n=1}^{\infty} \ell_{\infty}^{n})_{\ell_1}$ and let $(z_{n,j})_{n=1,j=0}^{\infty,n-1}$ be a basis of $Z$ isometrically equivalent to the unit vector basis of $(\bigoplus_{n=1}^{\infty} \ell_{\infty}^{n})_{\ell_1}$. Fix  $n\geqslant 1$. Then there exist $m>n$ and unit vectors $h_{n,j}\in [g_{m,i}]_{i=0}^{2^{m}-1}$ so that $\|z_{n,j}-h_{n,j}\|\leqslant \varepsilon/2^{n+3}$ ($j=0,1,\ldots,n-1$). We write $h_{n,j}=\sum _{i=0}^{2^{m}-1}a_{i,j}g_{m,i}$ and define $y_{n,j}=\sum _{i=0}^{2^{m}-1}a_{i,j}x_{m,i}\in X.$

\noindent \emph{Claim 2a.} For all $(t_{n,j})_{n=1,j=0}^{\infty, n-1}\in (\bigoplus_{n=1}^{\infty} \ell_{\infty}^{n})_{\ell_1}$ we have
\[
    (1-\frac{\varepsilon}{2})\sum _{n=1}^{\infty}\max _{0\leqslant j\leqslant n-1}|t_{n,j}|\leqslant \|\sum _{n=1}^{\infty}\sum _{j=0}^{n-1}t_{n,j}y_{n,j}\|\leqslant c(1+\varepsilon)^{2}\sum _{n=1}^{\infty}\max _{0\leqslant j\leqslant n-1}|t_{n,j}|.
\]

Indeed, by \eqref{B4} we get
\begin{align*}
\|\sum _{j=0}^{n-1}t_{n,j}y_{n,j}\|&=\|\sum _{i=0}^{2^{m}-1}(\sum _{j=0}^{n-1}a_{i,j}t_{n,j})x_{m,i}\|\\
&\leqslant c(1+\varepsilon_{m})\max_{0\leqslant i\leqslant 2^{m}-1}|\sum _{j=0}^{n-1}a_{i,j}t_{n,j}|\\
&=c(1+\varepsilon_{m})\|\sum _{j=0}^{n-1}t_{n,j}h_{n,j}\|\\
&\leqslant c(1+\varepsilon_{m})\Big(\|\sum _{j=0}^{n-1}t_{n,j}z_{n,j}\|+\sum _{j=0}^{n-1}t_{n,j}(h_{n,j}-z_{n,j})\|\Big)\\
&\leqslant c(1+\varepsilon_{m})\Big(\max _{0\leqslant j\leqslant n-1}|t_{n,j}|+n\varepsilon/2^{n+3}\max _{0\leqslant j\leqslant n-1}|t_{n,j}|\Big)\\
&\leqslant c(1+\varepsilon)^{2}\max _{0\leqslant j\leqslant n-1}|t_{n,j}|.
\end{align*}
Consequently, $$\|\sum _{n=1}^{\infty}\sum _{j=0}^{n-1}t_{n,j}y_{n,j}\|\leqslant \sum _{n=1}^{\infty}\|\sum _{j=0}^{n-1}t_{n,j}y_{n,j}\|\leqslant c(1+\varepsilon)^{2}\sum _{n=1}^{\infty}\max _{0\leqslant j\leqslant n-1}|t_{n,j}|.$$
On the other hand, by the choice of $m$ and $h_{n,j}$, we arrive at
\begin{align*}
\|Ty_{n,j}-z_{n,j}\|&\leqslant \|Ty_{n,j}-h_{n,j}\|+\|h_{n,j}-z_{n,j}\|\\
&=\|\sum _{i=0}^{2^{m}-1}a_{i,j}(Tx_{m,i}-g_{m,i})\|+\varepsilon/2^{n+3}\\
&\leqslant \varepsilon_{m}2^{m}\max _{0\leqslant i\leqslant 2^{m}-1}|a_{i,j}|+\varepsilon/2^{n+3}\\
&\leqslant \varepsilon/2^{n+3}+\varepsilon/2^{n+3}=\varepsilon/2^{n+2}.
\end{align*}
This implies
\begin{align*}
\|\sum _{n=1}^{\infty}\sum _{j=0}^{n-1}t_{n,j}y_{n,j}\|&\geqslant \|\sum _{n=1}^{\infty}\sum _{j=0}^{n-1}t_{n,j}Ty_{n,j}\|\\
&\geqslant \|\sum _{n=1}^{\infty}\sum _{j=0}^{n-1}t_{n,j}z_{n,j}\|-\|\sum _{n=1}^{\infty}\sum _{j=0}^{n-1}t_{n,j}(Ty_{n,j}-z_{n,j})\|\\
&\geqslant \sum _{n=1}^{\infty}\max _{0\leqslant j\leqslant n-1}|t_{n,j}|-\sum _{n=1}^{\infty}n\max _{0\leqslant j\leqslant n-1}|t_{n,j}|\frac{\varepsilon}{2^{n+2}}\\
&\geqslant (1-\frac{\varepsilon}{2})\sum _{n=1}^{\infty}\max _{0\leqslant j\leqslant n-1}|t_{n,j}|.\\
\end{align*}
Finally, by Claim, we get
\[
    \alpha_{(\bigoplus_{n=1}^{\infty} \ell_{\infty}^{n})_{\ell_1}}(X)\leqslant c(1+\varepsilon)^{2}/(1-\frac{\varepsilon}{2}).
\]
Letting $\varepsilon\rightarrow 0$ yields $\alpha_{(\bigoplus_{n=1}^{\infty} \ell_{\infty}^{n})_{\ell_1}}(X)\leqslant c$; since $c$ was arbitrary the proof of Step 3 is complete.\medskip

\noindent{Step 4.} $\beta_{L_{1}}(X^{*})\leqslant \theta_{C(\Delta)}(X)$.

This step follows from \eqref{9} together with Step 2. We are now ready to establish the final step of the proof.\medskip

\noindent{Step 5.} Suppose that $X$ is separable. Then $\theta_{C(\Delta)}(X)\leqslant \beta_{L_{1}}(X^{*})$.

Let $c>\beta_{L_{1}}(X^{*})$. Then there exist operators $A\colon L_{1}\rightarrow X^{*}, B\colon X^{*}\rightarrow L_{1}$ so that $BA=I_{L_{1}}, \|A\|=1$, and $\|B\|<c$.

Let $(f_{n,i})_{(n,i)\in \mathcal{F}}$ be a family of functions in $L_{\infty}$, $(x_{n,i})_{(n,i)\in \mathcal{F}}$ in $X$, and $(W_{n,i})_{(n,i)\in \mathcal{F}}$  associated to $\varepsilon_{n}=1/2^{2n+2}$ ($n=0,1,\ldots$) as described in Step 3. Since $X$ is separable, we may assume that the $\operatorname{d}$-diameter of $W_{n,i}\leqslant 2^{-n}$ for each $i$, where $\operatorname{d}$ is a metric giving the relative $\sigma(X^{*},X)$-topology on $B_{X^{*}}$.
Let
\[
    K=\bigcap _{n=0}^{\infty}(\bigcup _{i=0}^{2^{n}-1}W_{n,i})\quad\text{ and }\quad K_{n,i}=W_{n,i}\cap K\;\; \big((n,i)\in \mathcal{F}\big).
\]
Then $K$ is a compact, totally disconnected metric space without isolated points, hence homeomorphic to $\Delta$. Moreover, $K_{0,0}=K, K_{n+1,2i}\cup K_{n+1,2i+1}=K_{n,i}$ and $K_{n,i}\cap K_{n,j}=\varnothing$ if $i\neq j$. Hence $K=\bigcup_{i=0}^{2^{n}-1}K_{n,i}$ for all $n$. As seen in Step 3, the operator
$T\colon X\rightarrow C(K)$, defined by $\langle Tx,x^{*}\rangle=\langle x^{*},x\rangle$ ($x\in X,x^{*}\in K$), satisfies $\|Tx_{n,i}-g_{n,i}\|\leqslant \varepsilon_{n}$, where
$g_{n,i}=\mathds{1}_{K_{n,i}}\in C(K)$.

An argument analogous to Step 1 yields that, if $W$ is a finite-dimensional Banach space and $S\colon W\rightarrow C(K)$ is an operator, then, for every $\varepsilon>0$, there exists an operator $\widehat{S}\colon W\rightarrow X$ so that $\|\widehat{S}\|\leqslant c(1+\varepsilon)\|S\|$ and $\|S-T\widehat{S}\|\leqslant \varepsilon$.

Fix $\varepsilon>0$. By an argument analogous to the one from Step 1, we get an operator $S\colon X^{*}\rightarrow C(K)^{*}$ with $\|S\|\leqslant c(1+\varepsilon)^{2}$ so that $ST^{*}=I_{C(K)^{*}}$. This means that \[
    \theta_{C(\Delta)}(X)=\theta_{C(K)}(X)\leqslant c(1+\varepsilon)^{2}.
\]
Letting $\varepsilon\rightarrow 0$, we arrive at $\theta_{C(\Delta)}(X)\leqslant c.$
As $c$ is arbitrary, the proof is complete.\end{proof}

\subsection*{Acknowledgements}
The authors would like to thank Prof. W. B. Johnson for helpful discussions and comments. The first-named author was supported by the National Natural Science Foundation of China (Grant No.11971403) and the Natural Science Foundation of Fujian Province of China (Grant No. 2019J01024). The second-named author acknowledges with thanks funding received from SONATA 15 No.~2019/35/D/ST1/01734.

\end{document}